\def\@tocline#1#2#3#4#5#6#7{\relax
  \ifnum #1>\c@tocdepth 
  \else
    \par \addpenalty\@secpenalty\addvspace{#2}%
    \begingroup \hyphenpenalty\@M
    \@ifempty{#4}{%
      \@tempdima\csname r@tocindent\number#1\endcsname\relax
    }{%
      \@tempdima#4\relax
    }%
    \parindent\z@ \leftskip#3\relax \advance\leftskip\@tempdima\relax
    \rightskip\@pnumwidth plus4em \parfillskip-\@pnumwidth
    #5\leavevmode\hskip-\@tempdima
      \ifcase #1
       \or\or \hskip 1em \or \hskip 2em \else \hskip 3em \fi%
      #6\nobreak\relax
    \dotfill\hbox to\@pnumwidth{\@tocpagenum{#7}}\par
    \nobreak
    \endgroup
  \fi}
 \numberwithin{equation}{section}
\def\bB{{\mathbb{B}}}
\def\bR{{\mathbb{R}}}
\def\bS{{\mathbb{S}}}
\def\bN{{\mathbb{N}}}
\def\cH{{\mathscr{H}}}
\def\ve{\varepsilon}
\renewcommand{\d}{{\partial}}
\DeclareMathOperator{\diam}{diam}
\def\co{\mathop\mathrm{co }}						
\def\dim{\mathop\mathrm{dim}} 					
\def\dist{\mathop\mathrm{dist}} 						
\newcommand{\ps}[1]{\left( #1 \right)}
\newcommand{\ck}[1]{\left\{#1 \right\}}
\def\XXint#1#2#3{{\setbox0=\hbox{$#1{#2#3}{\int}$ }
\vcenter{\hbox{$#2#3$ }}\kern-.58\wd0}}
\theoremstyle{plain}
\newtheorem{theorem}{Theorem}
\newtheorem{corollary}[theorem]{Corollary}
\newtheorem{lemma}[theorem]{Lemma}
\theoremstyle{definition}
\newtheorem{definition}[theorem]{Definition}
\newtheorem{remark}[theorem]{Remark}
\newtheorem*{thmi}{Theorem I}
\newtheorem*{thmii}{Theorem II}
\numberwithin{equation}{section}
\numberwithin{theorem}{section}
\newcommand\eqn[1]{\eqref{e:#1}}
\newcommand\Lemma[1]{Lemma \ref{l:#1}}
  \DeclareFontFamily{U}{mathb}{\hyphenchar\font45} 
\DeclareFontShape{U}{mathb}{m}{n}{
      <5> <6> <7> <8> <9> <10> gen * mathb
      <10.95> mathb10 <12> <14.4> <17.28> <20.74> <24.88> mathb12
      }{}
\DeclareSymbolFont{mathb}{U}{mathb}{m}{n}
\DeclareMathSymbol{\toitself}      {3}{mathb}{"FD}  
\begin{document}

\title{Tangents, rectifiability, and corkscrew domains}

\author{Jonas Azzam}
\address{School of Mathematics, University of Edinburgh, JCMB, Kings Buildings,
Mayfield Road, Edinburgh,
EH9 3JZ, Scotland.}
\email{j.azzam "at" ed.ac.uk}
\keywords{Harmonic measure, absolute continuity, corkscrew domains, uniform rectifiability, tangent, contingent, Semmes surfaces}
\subjclass[2010]{31A15,28A75,28A78}
\thanks{The author was supported by grants ERC grant 320501 of the European Research Council (FP7/2007-2013)}

\maketitle

\begin{abstract}
In a recent paper, Cs\"ornyei and Wilson prove that curves in Euclidean space of $\sigma$-finite length have tangents on a set of positive $\cH^{1}$-measure. They also show that a higher dimensional analogue of this result is not possible without some additional assumptions. In this note, we show that if $\Sigma\subseteq \bR^{d+1}$ has the property that each ball centered on $\Sigma$ contains two large balls in different components of $\Sigma^{c}$ and $\Sigma$ has $\sigma$-finite $\mathscr{H}^{d}$-measure, then it has $d$-dimensional tangent points in a set of positive $\cH^{d}$-measure. As an application, we show that if the dimension of harmonic measure for an NTA domain in $\bR^{d+1}$ is less than $d$, then the boundary domain does not have $\sigma$-finite $\cH^{d}$-measure.

We also give shorter proofs that Semmes surfaces are uniformly rectifiable and, if $\Omega\subseteq \mathbb{R}^{d+1}$ is an exterior corkscrew domain whose boundary has locally finite $\mathscr{H}^{d}$-measure, one can find a Lipschitz subdomain intersecting a large portion of the boundary.

\end{abstract}

\tableofcontents

\section{Introduction}

In \cite{CW15}, Cs\"ornyei and Wilson show that curves of $\sigma$-finite $\cH^{1}$-measure in Euclidean space have tangents on a set of positive $\cH^{1}$-measure. 
For the definition of a tangent, see Definition \ref{d:tangent} below. They also show that the same result does not hold for higher dimensional surfaces by constructing a $d$-dimensional topological sphere $\Sigma\subseteq \bR^{d+1}$ with $\cH^{d}(\Sigma)<\infty$ but no $d$-dimensional tangents anywhere. Their example still contains a piece of a Lipschitz graph, and thus, for almost every $\xi$ in this set, it has {\it approximate $d$-dimensional tangents}, meaning $\liminf_{r\downarrow 0} \cH^{d}(B(\xi,r)\cap \Sigma)/r^{d}>0$ and there is a $d$-plane $V$ so that for all $t>0$, $\lim_{r\rightarrow 0} \cH^{d}(\{z\in B(\xi,r)\cap \Sigma: \dist(z,L)>t|z-\xi|\})/r^{d}=0$ (see Chapter 15 of \cite{Mattila} for more on tangents). 

\begin{definition}
For $C\geq 2$, a closed set $\Sigma\subseteq \bR^{d+1}$ satisfies the {\it $C$-two-ball condition} if for each $\xi\in \Sigma$ and $r\in (0,\diam \Sigma)$, there are two balls of radius $r/C$ contained in $B(\xi,r)$ in two different components of $\bR^{d+1}\backslash \Sigma$.
\end{definition}

With this extra condition, we obtain a generalization of the above result.

\begin{thmi}\label{t:MW}
If $\Sigma\subseteq \bR^{d+1}$ satisfies the two-ball condition and has $\sigma$-finite $\cH^{d}$-measure, then for any ball $B$ centered on $\Sigma$, the set of tangent points in $B\cap \Sigma$ has positive $\cH^{d}$-measure.
\end{thmi}
\def\TheoremI{Theorem \hyperref[t:MW]{I} }

It would be interesting to find a higher codimensional analogue of the above result, perhaps a variant of the generalized Semmes surfaces introduced by David, see page 107 of \cite{Dav88}.\\

As an application of this result, recall the dimension of a measure $\omega$ is 
\begin{multline*}
\dim \omega=\inf\{t:\mbox{ there is $E$ such that $\cH^{t}(E)=0$ and } \\
\mbox{for all compact sets $K$, $\omega(K)=\omega(K\cap E)$}\}.\end{multline*}

Makarov showed that the harmonic measure $\omega$ for a simply connected planar domain has $\dim \omega=1$ \cite{Mak85}. However, Wolff showed it was possible in $\bR^{3}$ to have NTA domains topologically equivalent to the sphere so that the associated harmonic measure has dimension larger or less than 2 (see also \cite{LVV05} for generalizations to higher dimensions). Badger used his main result in \cite[Theorem 5.1]{Bad12} to show that, if the harmonic measure $\omega$ for an NTA domain in $\bR^{d+1}$ had $\dim \omega<d$, then necessarily $\cH^{d}|_{\d\Omega}$ is locally infinite. Using \TheoremI, we obtain the following improvement.

\begin{corollary}
Let $\Omega\subseteq \bR^{d+1}$ be a corkscrew domain and $\omega$ its harmonic measure. Suppose $\dim \omega < d$. Then $\cH^{d}|_{\d\Omega}$ is not $\sigma$-finite.
\end{corollary}

If $\cH^{d}|_{\d\Omega}$ were $\sigma$-finite, then it would have tangents on a set $K\subseteq \d\Omega$ of positive $\cH^{d}$-measure by \TheoremI. Theorem III in \cite{AAM16} says that $\cH^{d}\ll \omega\ll \cH^{d}$ on the set of interior cone points for $\Omega$, and since tangent points are also cone points, it follows from  that $\cH^{d}\ll \omega\ll \cH^{d}$ on $K$. Since $\dim \omega <d$, we can find $E$ so that $\cH^{d}(E)=0$ and $\omega(E\cap K)=\omega(K)>0$, but the mutual absolute continuity would imply $0=\cH^{d}(E)\geq  \cH^{d}(E\cap K)>0$, a contradiction, and thus proves the corollary.\\

The techniques for proving \TheoremI can also be used to give a criterion for when a domain has Lipschitz subdomains intersecting a large portion of the boundary, which produces a shorter proof of a result from uniform rectifiability.

\begin{definition} A closed set $E\subseteq \bR^{n}$ is {\it $d$-uniformly rectifable} if 
\begin{enumerate}
\item $E$ is {\it $d$-Ahlfors regular}, meaning there is $A>0$ so that
\begin{equation}\label{e:AR}
  r^{d}/A\leq \cH^{d}(B(\xi,r)\cap E) \leq Ar^{d} \mbox{ for }\xi\in E, r\in (0,\diam E),
\end{equation}
  \item {\it $E$ has big pieces of Lipschitz images}, meaning there are constants $L,c>0$ so  for all $\xi\in E$ and $r\in (0,\diam E)$, there is a Lipschitz map $f:\bR^{d}\rightarrow \bR^{n}$ $L$-Lipschitz and $\cH^{d}(f(\bR^{d})\cap B(\xi,r))\geq cr^{d}$.
  \end{enumerate}
\end{definition}

These sets were introduced by David and Semmes in \cite{DS} in the context of singular integrals, and it is an interesting problem to isolate simple geometric criteria that guarantee uniform rectifiability. Below is one such criterion due to Semmes \cite{Semmes89}:

\begin{definition}
A $d$-Alhfors regular set $E\subseteq \bR^{d+1}$ satisfying the two-ball condition is called a {\it Semmes surface}.
\end{definition}
In \cite{Dav88}, David showed that Semmes surfaces are unfiformly rectifiable as well as certain higher codimensional generalizations. Since then, other proofs have been developed and in much more generality, see for example  \cite{of-and-on} and \cite{DS93}. Possibly the best such result is that of Jones, Katz, and Vargas \cite{JKV97}, where they show that for all $A,M,\ve>0$ there is $L>0$ so that if $\Omega$ is {\it any} domain with $B(0,1)\subseteq \Omega\subseteq B(0,M)$ and $\cH^{d}(\d\Omega)\leq A<\infty$, then there is a radial $L(\ve,d,M,A)$-Lipschitz graph $\Gamma$ so that $|\bS^{d}\backslash\{x/|x|:x\in \Gamma\cap \d\Omega\}|<\ve$. 

Another much shorter proof is that of David and Jerison \cite{DJ90}, where they show that the Lipschitz images can also be taken to be boundaries of Lipschitz subdomains of $E^{c}$.  An {\it $L$-Lipschitz domain} is a set of the form
 \[
 T(\{(x,y)\in \bB_{d} \times \bR: f(x)>y> -\sqrt{1-|x|^{2}}\})\]
 where $\bB_{d}$ is the unit ball in $\bR^{d}$, $f:\bR^{d}\rightarrow \bR$ is any nonnegative $L$-Lipschitz supported in $\bB_{d}$, and $T$ is a conformal affine map. Traditionally, Lipschitz domains are defined more generally, but this will suit our purposes. 

\begin{definition}
For $C\geq 2$, an open set $\Omega\subseteq \bR^{d+1}$, $d\geq 1$, is an {\it exterior (or interior) $C$-corkscrew domain} if for all $\xi\in \d\Omega$ and $r\in (0,\diam \d\Omega)$ there is a ball  $B(x,r/C)\subseteq B(\xi,r)\backslash \Omega$ (or a ball $B(x,r/C)\subseteq B(\xi,r)\cap \Omega$). We'll say $\Omega$ is a {\it $C$-corkscrew domain} if it has both exterior and interior corkscrews.
\end{definition}

 \begin{thmii}\label{t:main}
For $d,M,C\geq  1$, there are $\psi=\psi(d,C)>0$ and $L=L(d,C,M)\geq 1$ such that the following holds. Let $\Omega\subseteq \bR^{d+1}$ be a $C$-exterior corkscrew domain, and $B$ a ball of radius $r\in (0,\diam \d\Omega)$ centered on $\d\Omega$ such that $\cH^{d}(B\cap \d\Omega)/r^{d}\leq M<\infty$. Also assume there is $B(x,\rho r/C)\subseteq B\cap \Omega$. Then there is an $L$-Lipschitz domain $\Omega'\subseteq B$ with $\cH^{d}(\d \Omega'\cap \d\Omega)\geq \psi (\rho r)^{d}$. \end{thmii}
\def\TheoremII{Theorem \hyperref[t:main]{II} }

\TheoremII gives an even shorter proof that Semmes surfaces are uniformly rectifiable as follows: Let $E$ be a Semmes surface, $\xi\in E$ and $r>0$, pick an interior corkscrew $B=B(x,r/C)\subseteq B(\xi,r)\backslash E$. Observe that if $\Omega$ is the connected component of $E^{c}$ containing $B$, then $\Omega$ is an exterior corkscrew domain as each ball centered on $E$ must have two corkscrew balls in two different components of $E^{c}$, and so one of them cannot be $\Omega$. Moreover, there is $\xi'\in \d\Omega\cap [x,\xi]$ so that $B\subseteq B(\xi',r)\cap \Omega$ and $\cH^{d}(B(\xi',r)\cap \d\Omega)\leq \cH^{d}(B(\xi',r)\cap E)\leq Ar^{d}$. We can then apply \TheoremII to find a large Lipschitz image in $B(\xi,r)\cap E$. Note that while each component of $E^{c}$ is an exterior corkscrew domain, it may not be interior corkscrew. Just consider $E=\{(x,y):|y|=x^{2},x\in \bR\}$, then the component containing the point $(1,0)$ does not have the $C$-interior corkscrew condition for any $C$.

Badger proves something similar to \TheoremII in \cite[Theorem 2.4]{Bad12}. He observed that the proof in David and Jerison gives a version of \TheoremII if we just assume the boundary is locally $\cH^{d}$-finite rather than Ahlfors regular. His result gives a bit more information, but he needs both interior and exterior corkscrews for his domains.

The additional motivation for finding {\it interior} big pieces of Lipschitz domains aside from uniform rectifiability is a result of Dahlberg \cite{Dah77}, which says that harmonic measures on Lipschitz domains are $A_{\infty}$-weights. Using a version of \TheoremII, David and Jerison showed harmonic measure is an $A_{\infty}$-weight if $\Omega$ has Ahlfors regular boundary and is a nontangentially accessible (or NTA) domain, which happen to be connected corkscrew domains. (We will not discuss the definition of an NTA domain and refer the reader to its inception in \cite{JK82}.) Badger in turn, using his version of \TheoremII, shows that $\cH^{d}|_{\d\Omega}\ll \omega$ ($\omega$ denoting harmonic measure) if we only assume $\cH^{d}|_{\d\Omega}$ is Radon.\\

The common thread in our proofs of \TheoremI and \TheoremII is to use Fubini's theorem to show that if a portion of $\d\Omega\cap B$, say, has large projections in several directions, then there must be a set $E$ of points in $\d\Omega\cap B$ of large measure which are "visible" from a positive measure set of directions (that is, for each $\xi\in E$ there are line segments emanating from $\xi$ in many directions without hitting $\d\Omega$) . We then show that, for each $\xi\in E$, the set of directions are dense enough around one particular direction that in fact there is a whole spherical cap of directions that $\xi$ is visible from (since if one of those rays did hit the boundary, we would find an exterior corkscrew that would have to block one of these directions). Thus, $\xi$ is the apex of a cone contained in $\Omega$. From here it is not too hard to show that a large portion of $E$ lies in the boundary of a Lipschitz domain. \\

The author would like to thank Mihalis Mourgoglou for his helpful discussions. Part of this work was done while the author was attending the 2015 Research Term on Analysis and Geometry in Metric Spaces at the ICMAT. 

\section{Preliminaries}

We write $B(x,r)$ for the closed ball in $\bR^{d+1}$ centered at $x$ of radius $r$ and $B_{\bS^{d}}(\theta,r)$ denote the closed ball in $\bS^{d}$ centered at $\theta\in \bS^{d}$ of radius $r$ with respect to arclength. In particular, for $\delta>0$, we let $B(\delta)=B_{\bS^{d}}(-e_{d+1},\delta)$. For a set $A$, we will let $\cH^{d}(A)$ and $|A|$ denote the $d$-dimensional Hausdorff measure (whose definition can be found in \cite{Mattila}) normalized so that $w_{d}:=|B(0,1)\cap \bR^{d}|$ is equal to the $d$-dimensional Lebesgue measure of $B(0,1)\cap \bR^{d}$. For $x\in \bR^{d+1}$, we set $\dist(x,A)=\inf\{|x-y|:y\in A\}$.  

In this section, we prove three lemmas that will be used in the proofs of \TheoremI and \TheoremII.

\begin{lemma}\label{l:porous}
There is $\delta_{0}=\delta_{0}(d)>0$ so that for all $\eta,\kappa>0$ and $\delta\in (0,\delta_{0})$, there is $c_{0}=c_{0}(\kappa,\eta,d)>0$ such that for any $ A\subseteq B(\delta)$ with $|A|\geq \kappa |B(\delta)|$, there is $\theta_{A}\in {A}$ and $r_{A}\in( c_{0}\delta,\delta)$ so that any $\theta \in B_{\bS^{d}}(\theta_{A},r_{A})$ is at most $\eta r_{A}$ from ${A}$ with respect to the arclength metric on $\bS^{d}$.
\end{lemma}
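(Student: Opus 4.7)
The plan is to prove the contrapositive quantitatively, via the standard \emph{``porous set has small measure''} iteration. First, I would fix $\delta_0=\delta_0(d)$ small enough so that for any $\delta\in(0,\delta_0)$ the spherical cap $B(\delta)$ is bi-Lipschitz equivalent to the Euclidean ball of radius $\delta$ in $\bR^d$ with constants arbitrarily close to $1$. This reduces both the local ball geometry on $\bS^d$ and the measure $|\cdot|$ to essentially Euclidean ones, so from here on we may reason as in $\bR^{d}$.

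Suppose, towards a contradiction, that the conclusion of the lemma fails for some $A\subseteq B(\delta)$ with $|A|\geq \kappa|B(\delta)|$. Then for every $\theta\in A$ and every $r\in(c_0\delta,\delta)$ there exists $\theta'(\theta,r)\in B_{\bS^{d}}(\theta,r)$ with $\dist(\theta',A)>\eta r$, so in particular $B_{\bS^{d}}(\theta',\eta r)\cap A=\emptyset$. This is a uniform $\eta$-porosity condition on $A$ at every point of $A$ and every scale in $(c_0\delta,\delta)$, and it is classical that such a condition forces $|A|$ to decay geometrically in the number of available scales.

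I would then iterate along the geometric sequence $r_k=\delta\lambda^k$ for some fixed $\lambda=\lambda(\eta,d)\in(0,1)$ (say $\lambda\asymp\eta$). At step $k$, cover the part of $A$ still alive by a bounded-overlap collection of balls $B_{\bS^{d}}(\theta,r_k)$ centered at points of $A$; in each such ball the porosity hypothesis produces a sub-ball of radius $\eta r_k$ disjoint from $A$. Extracting a pairwise disjoint subfamily of these porous balls by a Vitali-type selection and summing their measures yields
\[
|A\cap B_{\bS^{d}}(\theta_0,\delta)|\leq (1-c_1(d)\,\eta^d)\cdot |\text{previous stage}|,
\]
and iterating $N\approx \log(1/c_0)/\log(1/\lambda)$ times gives $|A|\leq (1-c_1\eta^d)^{N}|B(\delta)|$. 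Choosing $c_0=c_0(\kappa,\eta,d)$ small enough so that $(1-c_1\eta^d)^N<\kappa$ contradicts the density hypothesis $|A|\geq\kappa|B(\delta)|$, completing the proof.

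The main obstacle is making this iteration genuinely compound the measure loss: the porous balls extracted at each scale must lie inside the region still occupied by $A$ from the previous stage and be essentially disjoint from porous balls removed earlier, so that the loss factor $(1-c_1\eta^d)$ multiplies honestly at each scale. This is the usual Vitali/stopping-time bookkeeping with constants depending only on $d$ and $\eta$, but some care is needed in the spherical setting to ensure that the bounded-overlap constants and the comparison to Euclidean balls depend neither on $\delta$ nor on the position of the cap.
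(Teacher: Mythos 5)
Your argument is correct in outline, but it takes a genuinely different route from the paper's. Both proofs begin by observing that for small $\delta$ the cap $B(\delta)$ is nearly isometric to a Euclidean ball, and both reduce to a statement about dyadic scales in $\bR^{d}$. The paper then projects $A$ to $A'\subseteq[-\delta,\delta]^{d}$ and introduces a Carleson-type ``hole function'' $\lambda(Q)=\sum_{Q_{j}\subseteq Q}\ell(Q_{j})^{d+1}/\ell(Q)^{d+1}$, where the $Q_{j}$ are the maximal dyadic cubes in $Q_{0}\setminus A'$; it proves the packing estimate $\sum_{Q}\lambda(Q)|Q|\leq 2|Q_{0}|$ in one shot, and combines it with the density lower bound (which forces roughly $\kappa 2^{nd}$ cubes of generation $n$ to meet $A'$) to bound the number $N$ of generations on which every cube meeting $A'$ can contain an $\eta$-hole. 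This gives $N\lesssim_{d}\eta^{-(d+1)}\kappa^{-1}$ with no iteration at all, hence an explicit $c_{0}$. Your approach instead runs the classical ``porous sets are small'' scheme directly: assuming porosity of $A$ at every scale in $(c_{0}\delta,\delta)$, iterate the loss of a fixed $\eta^{d}$-proportion of measure per geometric step and derive $|A|\leq(1-c_{1}\eta^{d})^{N}|B(\delta)|$, contradicting the density hypothesis once $N$ is large enough. The obstacle you identify---making the measure loss compound honestly across scales---is real, and it is exactly what the dyadic-cube framework handles cleanly (a hole of relative sidelength $\geq\eta$ in a dyadic cube $Q$ eliminates a fixed fraction of $Q$'s dyadic descendants at the next usable generation, with no overlap bookkeeping), whereas with round balls and Vitali selection one has to tame the overlap constants, e.g.\ by working with the shrinking $r_{k}$-neighborhoods of $A$ rather than with covers. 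Your argument can be made rigorous that way, but you would essentially be reconstructing the paper's cube decomposition; the paper's Carleson packing inequality sidesteps the iteration entirely and is the slicker of the two.
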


\begin{proof}
Let $Q_{0}=[-\delta,\delta]^{d}\subseteq \bR^{d}$. Here, when we say {\it dyadic cube}, we mean a set $Q$ of the form $\prod_{i=1}^{d}[j_{i}2^{k},(j_{i}+1)2^{k}]$ for any integers $j_{1},...,j_{d},k\in \bN$ and we will denote the sidelength of $Q$ by $\ell(Q)$.  Let $\pi$ be the orthogonal projection onto $\bR^{d}$ and $A'=\pi(A)\subseteq Q_{0}\subseteq \bR^{d}$. For $\delta$ small enough, we can guarantee $\pi:\bS^{d}\cap \pi^{-1}(Q_{0})\rightarrow Q_{0}$ has a $2$-bi-Lipschitz inverse on $Q_{0}$ (with respect to the arclength metric in $\bS^{d}$), and so $| {A'}|\geq 2^{-d}| {A}|\geq 2^{-d}\kappa|B(\delta)|\geq c\kappa \delta^{d}$ for some $c=c(d)$.

Let $\{Q_{j}\}$ be the maximal dyadic cubes in $Q_{0}\backslash {A'}$. For $Q\subseteq Q_{0}$ define
\[\lambda(Q)=\sum_{Q_{j}\subseteq Q}\frac{\ell(Q_{j})^{d+1}}{\ell(Q)^{d+1}}\]
where the sum is zero if $Q$ contains no $Q_{j}$. Then
\begin{align}
\sum_{Q\subseteq Q_{0}} \lambda(Q)|Q| 
& =\sum_{Q\subseteq Q_{0}}\sum_{Q_{j}\subseteq Q} |Q_{j}| \frac{\ell(Q_{j})}{\ell(Q)}
=\sum_{j} |Q_{j}|\sum_{Q_{j}\subseteq Q\subseteq Q_{0}}\frac{\ell(Q_{j})}{\ell(Q)} \notag \\
& \leq 2\sum_{j}|Q_{j}|\leq 2|Q_{0}|.
\label{e:lambda}
\end{align}

Note that there are at least $M_{n}:=2^{nd-1} | {A'}|/|Q_{0}| \geq 2^{nd-1}c\kappa$ dyadic cubes of sidelength $2^{-n}\ell(Q_{0})$ that intersect ${A'}$. Suppose there is $N\in \bN$ such that all cubes $Q$ intersecting ${A'}$ of sidelength at least $2^{-N}\ell(Q_{0})$ contain a $Q_{j}$ with $\ell(Q_{j})\geq \eta \ell(Q) $ (so $\lambda(Q)\geq \eta^{d+1}$). Then
\begin{equation}\label{e:lambda2}
\eta^{d+1}Nc\kappa 2^{-1}|Q_{0}|
\leq \sum_{n=0}^{N-1}\eta^{d+1}M_{n}2^{-nd}|Q_{0}|
\leq \sum_{Q\subseteq Q_{0} : Q\cap A'\neq\emptyset}\lambda(Q)|Q|.
\end{equation}
Then \eqn{lambda} and \eqn{lambda2} imply $N\leq N_{0}:=\frac{4}{\eta^{d+1}c\kappa}$. Hence, there is $Q$ with $\ell(Q)\geq 2^{-N_{0}}\ell(Q_{0})$ that intersects ${A'}$ and so that there is no dyadic cube in $Q\backslash  {A'}$ of sidelength at least $\eta\ell(Q)$.  In particular, every point in $Q$ is at most $\eta\sqrt{d}\ell(Q)$ away from a point in ${A'}$. Thus every point in $\pi^{-1}(Q)\cap \bS^{d}$ is at most $2\sqrt{d}\eta\ell(Q)$ in the path metric on $\bS^{d}$ from $ {A}$. Thus, we can find a point $\theta_{A}\in  {A}\cap \pi^{-1}(Q)$ (say, the point in $A$ whose projection is closest to the center of $Q$) and $r_{A}>0$ so that $B_{\bS^{d}}(\theta_{A},r_{A})\subseteq \pi^{-1}(Q)\cap \bS^{d}$, $c_{1}\ell(Q)\leq r_{A}< \ell(Q)\leq\delta$ for some $c_{1}=c_{1}(d)>0$, and every point in $B_{\bS^{d}}(\theta_{A},r_{A})$ is at most $c_{2}\eta r_{A}$ from $ {A}$ where $c_{2}$ depends only on $d$. 
\end{proof}

\begin{lemma}\label{l:cones}
Let $\delta\in (0,\delta_{0})$, $\kappa>0$, $0<4C\eta<\upsilon<1/4$ be small, $\theta_{0}\in \bS^{d+1}$ and $A\subseteq B_{\bS^{d}}(\theta_{0},\delta)$ be such that $|A|\geq \kappa |B_{\bS^{d}}(\theta_{0},\delta)|$. Suppose $\theta_{A}\in A$ and $r_{A}>0$ are such that for all $\theta\in B_{\bS^{d}}(\theta_{A},r_{A})$, $\theta$ is at most $\eta r_{A}$ from $A$. Let $\Omega$ be a $C$-exterior corkscrew domain, $\xi\in \d\Omega$, $t\in (0,\diam \d\Omega)$, and assume $(\xi,\xi+\theta t)\subseteq \Omega$ for all $\theta\in A$. Let 
\[
C(\xi,\theta_{A},\upsilon r_{A}, t)
 = B(\xi,r_{A})\cap \{x\in \bR^{d+1}: (x-\xi)/|x-\xi|\in B_{\bS^{d}}(\theta_{A},\upsilon r_{A})\}.
\]
Then $C(\xi,\theta_{A},\upsilon r_{A},t/2)\subseteq \Omega$.
\end{lemma}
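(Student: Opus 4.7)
The plan is to argue by contradiction. Suppose some $x \in C(\xi, \theta_A, \upsilon r_A, t/2) \setminus \{\xi\}$ fails to lie in $\Omega$; set $\theta_x := (x - \xi)/|x - \xi| \in B_{\bS^d}(\theta_A, \upsilon r_A)$ and $s := |x - \xi| \in (0, t/2]$. The strategy is to locate a point $y \in \d\Omega$ on the ray from $\xi$ in direction $\theta_x$, apply the exterior $C$-corkscrew condition at $y$ at a scale calibrated to $r_A$, and use the density of $A$ in $B_{\bS^d}(\theta_A, r_A)$ to produce a direction $\theta' \in A$ whose ray from $\xi$ is forced to enter the corkscrew ball, contradicting the hypothesis $(\xi, \xi + \theta' t) \subseteq \Omega$.

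To produce $y$, I set $s^* := \sup\{s' \in [0, s] : (\xi, \xi + s'\theta_x) \subseteq \Omega\}$. Openness of $\Omega$ together with $x \notin \Omega$ forces $y := \xi + s^* \theta_x \in \d\Omega$ whenever $s^* > 0$ (the sub-case $x \in \d\Omega$ corresponds to $s^* = s$; the degenerate case $s^* = 0$ is reduced to the main argument by repeating it on a sequence of boundary points along the ray tending to $\xi$). Now apply the $C$-exterior corkscrew at $y$ with radius
\[
r := 4 C \eta r_A s^*.
\]
The bounds $4C\eta < \upsilon < 1/4$ and $r_A \leq \delta_0 \leq 1$ give $r \leq s^*/4$, producing $B(w, r/C) \subseteq B(y, r) \setminus \Omega$ with $|w - \xi| \in [\tfrac{3}{4}s^*, \tfrac{5}{4}s^*]$ and $B(w, r/C) \subseteq B(\xi, t)$.

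For the angular estimates, set $\theta_w := (w - \xi)/|w - \xi|$. Since $w$ lies within $r$ of the point $y$ on the $\theta_x$-ray, the perpendicular distance from $w$ to this ray is at most $r$, so $d_{\bS^d}(\theta_w, \theta_x) \leq \arcsin(r/(s^* - r)) \lesssim C \eta r_A$. Combined with $d_{\bS^d}(\theta_x, \theta_A) \leq \upsilon r_A$ and the hypothesis $4C\eta < \upsilon$, this places $\theta_w$ inside $B_{\bS^d}(\theta_A, r_A)$, so the density hypothesis on $A$ supplies $\theta' \in A$ with $d_{\bS^d}(\theta', \theta_w) \leq \eta r_A$. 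The spherical cap of directions from $\xi$ whose rays intersect $B(w, r/C)$ is centered at $\theta_w$ with angular radius at least $(r/C)/|w - \xi| \geq 16 \eta r_A / 5 > \eta r_A$, so $\theta'$ sits in this cap. Hence the ray from $\xi$ in direction $\theta'$ enters $B(w, r/C) \subseteq \Omega^c$ within distance $|w - \xi| + r/C \leq \tfrac{3}{2} s^* \leq \tfrac{3t}{4} < t$ of $\xi$, contradicting $(\xi, \xi + \theta' t) \subseteq \Omega$.

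The main obstacle is the calibration of the corkscrew radius $r$: it must be small enough relative to $s^*$ that $\theta_w$ stays in $B_{\bS^d}(\theta_A, r_A)$, where the density of $A$ kicks in, and yet large enough that the angular cap of directions from $\xi$ meeting $B(w, r/C)$ strictly exceeds the density scale $\eta r_A$. The assumption $4C\eta < \upsilon$ is precisely the margin that makes both constraints simultaneously satisfiable; the factor $4$ in $r = 4 C \eta r_A s^*$ provides comfortable slack on both sides.
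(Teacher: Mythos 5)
Your argument follows essentially the same route as the paper: locate a point of $\d\Omega$ inside the cone, apply the exterior corkscrew there at a scale comparable to its distance from $\xi$, show the resulting ball blocks a spherical cap of directions of angular radius $>\eta r_A$ centered at a point of $B_{\bS^{d}}(\theta_A,r_A)$, and contradict the density of $A$. Your calibration of the corkscrew radius ($4C\eta r_A s^*$) differs cosmetically from the paper's ($|\zeta|\sin\upsilon r_A$), but the arithmetic you carry out is sound and both choices exploit the gap $4C\eta<\upsilon$ in the same way.

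There is, however, a genuine gap in your production of the boundary point. When $s^*=0$ you propose to ``repeat the argument on a sequence of boundary points along the ray tending to $\xi$,'' but such boundary points need not exist: it is perfectly possible that $(\xi,\xi+\varepsilon\theta_x)$ lies entirely in the \emph{exterior} of $\Omega$ for some $\varepsilon>0$ (the ray punches straight from $\xi\in\d\Omega$ into $\bR^{d+1}\setminus\overline\Omega$), in which case the ray in direction $\theta_x$ contains no points of $\d\Omega$ near $\xi$ and your reduction stalls. The fix is the one the paper uses implicitly: since $\theta_A\in A$, the point $\xi+\tfrac{t}{4}\theta_A$ lies in both $\Omega$ and the punctured cone $C(\xi,\theta_A,\upsilon r_A,t/2)\setminus\{\xi\}$, and $x$ lies in the punctured cone but not in $\Omega$; since the punctured cone is connected (indeed, the segment $[\xi+\tfrac t4\theta_A,x]$ lies in it and avoids $\xi$), it must meet $\d\Omega$ at some $\zeta\neq\xi$. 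Running your corkscrew/cap estimate at $\zeta$, with the corkscrew radius calibrated to $|\zeta-\xi|$ and using $d_{\bS^d}\bigl((\zeta-\xi)/|\zeta-\xi|,\theta_A\bigr)\leq\upsilon r_A$ in place of the bound on $\theta_x$, closes the argument and removes the need for the $s^*$ dichotomy altogether.
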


\begin{proof}

Without loss of generality, we will assume $\theta_{0}=-e_{d+1}$ so that $B_{\bS^{d}}(\theta_{0},\delta)=B(\delta)$, and that $\xi=0$. Set
\[
C_{1}= C(0,\theta_{A},\upsilon r_{A},t/2), \;\; C_{2}= C(0,\theta_{A},2\upsilon r_{A},t).\]
Suppose $C_{1}\cap \Omega^{c}\neq\emptyset$. Since $(0,\frac{t}{2}\theta_{A})\subseteq C_{1} \cap \Omega$, connectivity of this cone implies there is $\zeta\in C_{1}\cap \d\Omega$. We claim that 
\begin{equation}\label{e:Bzeta}
B(\zeta,|\zeta|\sin \upsilon r_{A})\subseteq C_{2}.
\end{equation}
Let $w\in B(\zeta, |\zeta|\sin \upsilon r_{A} )$. The largest angle $w$ may have with $\zeta$ is if $[0,w]$ is tangent to the ball $ B(\zeta, |\zeta|\sin \upsilon r_{A})$, in which case the angle is $\upsilon r_{A}$. As the angle of $\zeta$ with $\theta_{A}$ is at most $\upsilon r_{A}$, the angle of $w$ with $\theta_{A}$ is at most $2\upsilon r_{A}$. Thus, $w/|w|\in B_{\bS^{d}}(\theta_{A},2\upsilon r_{A})$, and moreover, for all $w\in  B(\zeta, |\zeta|\sin \upsilon r_{A} )$
\[
|w|\leq |\zeta|(1+\sin \upsilon r_{A})\leq  \frac{t}{2} (1+1)\leq t\]
  (see Figure \ref{wz}.a). These two facts imply \eqn{Bzeta}. 
%
\begin{figure}
\includegraphics[width=205pt]{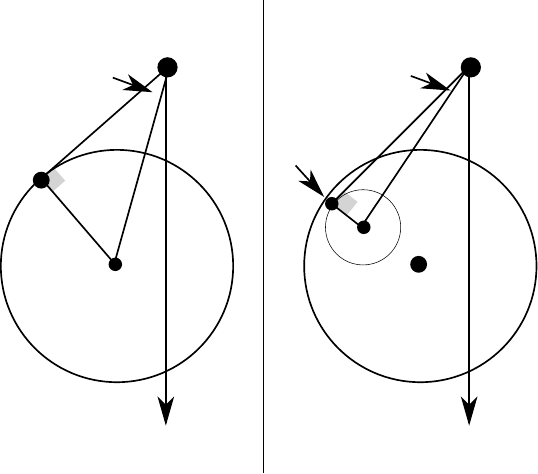}
\begin{picture}(0,0)(212,0)
\put(40,65){$\zeta$}
\put(155,65){$\zeta$}
\put(60,160){$0$}
\put(175,160){$0$}
\put(10,120){$w$}
\put(133,90){$z$}
\put(110,120){$y$}
\put(155,155){$\psi$}
\put(135,70){$B_{z}$}
\put(-15,86){$|\zeta|\sin \upsilon r_{A}$}
\put(70,20){$\theta_{A}$}
\put(187,20){$\theta_{A}$}
\put(30,150){${\upsilon r_{A}}$}
\put(40,0){a.}
\put(150,0){b.}
\end{picture}
\caption{
}
\label{wz}
\end{figure}

Since $\Omega$ has $C$-exterior corkscrews, we may find 
\begin{equation}\label{e:bz}
B_{z}:=B\ps{z, \frac{|\zeta|\sin \upsilon r_{A}}{C}}\subseteq B(\zeta,|\zeta|\sin \upsilon r_{A})\backslash \Omega.\end{equation}
 If $\psi>0$ is such that 
\[ B_{\bS^{d}}\ps{{z}/{|z|},\psi}=\ck{{y}/{|y|}:y\in B_{z}},\]  then
\begin{equation}\label{e:z/z}
B_{\bS^{d}}(z/|z|,\psi)\subseteq B_{\bS^{d}}(\theta_{A},r_{A})\backslash A.
\end{equation}
Indeed, \eqn{Bzeta}, \eqn{bz}, and the fact that $\upsilon<1/2$  imply 
\[
B_{\bS^{d}}(z/|z|,\psi)\subseteq B_{\bS^{d}}(\theta_{A},2 \upsilon r_{A})\subseteq B_{\bS^{d}}(\theta_{A},r_{A}).\]
Since $(0,y)\cap \Omega^{c}\supseteq (0,y)\cap B_{z}\neq\emptyset$ for all $y\in B_{z}$, we must have $B_{\bS^{d}}(z/|z|,\psi) \subseteq A^{c}$, and this completes the proof of \eqn{z/z}. 

Since 
\[
|z|\leq |\zeta|+\frac{|\zeta|\sin \upsilon r_{A}}{C}\leq 2|\zeta|\]
 and $\sin \upsilon r_{A}\geq \upsilon r_{A}/2$ for $\upsilon$ small enough, if $[0,y]$ is tangent to $B_{z}$ (see Figure \ref{wz}.b), then by our assumption on $\eta$,
\[\psi\geq\sin \psi = \frac{C^{-1}|\zeta|\sin \upsilon r_{A}}{|z| }\geq \frac{\upsilon r_{A}}{4C}>\eta r_{A}.\]
Thus, we know $A\cap B_{\bS^{d}}(z/|z|,\psi)\neq\emptyset$ by assumption, contradicting \eqn{z/z}.
\end{proof}

\begin{lemma}\label{l:T}
Let $\Sigma\subseteq \bR^{d+1}$ be closed, $B$ be a ball centered on $\Sigma$ of radius $r>0$, and suppose $B(0,\frac{r}{2C})$ and $B(ae_{d+1},r/C)$ are contained in different components of $\Sigma^{c}$ in $B$ for some $a>0$. There is $\delta_{1}=\delta_{1}(C,a)>0$ so that the following holds. For $\delta\in(0,\delta_{1})$  and $\theta\in \bS^{d}$, let 
\[
L_{\theta}=\{x\in\bR^{d+1}:x\cdot \theta=0\}, \;\; D_{\theta}=B\ps{0,\frac{r}{2C}}\cap L_{\theta},\]
 $\pi_{\theta}$ be the orthogonal projection onto $L_{\theta}$, $T$ be the convex hull of $B(0,\frac{r}{2C})\cup B(ae_{d+1},r/C)$ and $S=T\cap \Sigma$. Then $\pi_{\theta}(S)\supseteq D_{\theta}$ for all $\theta\in B(\delta)$. 
\end{lemma}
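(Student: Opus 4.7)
The plan is to produce, for each $y\in D_\theta$, a point of $S$ lying on the affine line $\ell_y := y+\bR\theta$ by a one-dimensional connectedness argument. Set $B_1:=B(0,r/(2C))$ and $B_2:=B(ae_{d+1},r/C)$. The key claim will be that, for $\delta$ small enough in terms of $C$ and $a/r$, the line $\ell_y$ meets both $B_1$ and $B_2$. Once this is established, the segment of $\ell_y$ from a hit with $B_1$ to a hit with $B_2$ lies in $T$ (hence in $B$) by convexity of $T$ and of $B$, and since its endpoints lie in distinct components of $B\setminus\Sigma$, the segment must meet $\Sigma$, yielding a point of $T\cap\Sigma=S$ whose $\pi_\theta$-image is $y$.

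To verify the incidence claim I would first note that since $y\in L_\theta$, the distance from the origin to $\ell_y$ equals $|y|\le r/(2C)$, so $\ell_y$ automatically meets $B_1$. For $B_2$, let $\alpha\in[0,\delta]$ denote the spherical distance from $\theta$ to $-e_{d+1}$; a direct computation using $\pi_\theta(p)=p-(p\cdot\theta)\theta$ gives
\[
|\pi_\theta(ae_{d+1})| = a\sin\alpha \le a\sin\delta,
\]
so $\dist(ae_{d+1},\ell_y)\le |\pi_\theta(ae_{d+1})|+|y|\le a\sin\delta+r/(2C)$. Requiring $\sin\delta_1\le r/(2Ca)$ makes this at most $r/C$, whence $\ell_y$ also meets $B_2$. (Since the hypothesis $B(ae_{d+1},r/C)\subseteq B$ forces $a<r$, one could in fact absorb the $a$-dependence into $C$, but the weaker dependence $\delta_1(C,a)$ asserted in the lemma suffices.)

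Having established this, the conclusion follows in one line: pick $p_i\in\ell_y\cap B_i$, observe that $[p_1,p_2]\subseteq T\subseteq B$ by convexity, and note that the connected set $[p_1,p_2]$ has its two endpoints in distinct components of $B\setminus\Sigma$, so it must meet $\Sigma$. Any such crossing point lies in $S$ and on $\ell_y$, hence projects to $y$ under $\pi_\theta$, giving $y\in\pi_\theta(S)$ as desired. The whole argument is essentially an elementary convexity/connectedness sandwich; the only genuine computation — and thus the only real obstacle — is the trigonometric identity $|\pi_\theta(ae_{d+1})|=a\sin\alpha$, which is precisely what pins down the admissible range of $\delta_1$.
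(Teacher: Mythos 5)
Your proof is correct and follows essentially the same route as the paper's: both arguments reduce to showing that for each $y\in D_\theta$ the line $y+\bR\theta$ meets both $B(0,r/(2C))$ and $B(ae_{d+1},r/C)$, after which the joining segment lies in $T\subseteq B$ by convexity and must cross $\Sigma$ because the two balls lie in distinct components of $B\setminus\Sigma$. The paper packages the incidence fact via a small rotation of the intermediate ball $B(ae_{d+1},r/(2C))$, whereas you compute $|\pi_\theta(ae_{d+1})|=a\sin\alpha$ directly; this is the same observation, and you actually spell out the connectedness step that the paper compresses into a single ``and so.''
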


\begin{proof} 

Note that $B(ae_{d+1},\frac{ r}{2C})$ contained in the interior of $B(ae_{d+1},r/C)$, and there is $\delta_{1}>0$ so that if $\Theta$ is a rotation about zero in any direction by angle $\theta\in B(\delta_{1})$, then we still have $\Theta(B(ae_{d+1},\frac{ r}{2C}))\subseteq B(ae_{d+1},r/C)$, and so $\pi_{\theta}(S)\supseteq \pi_{\theta}(\Theta(B(ae_{d+1},\frac{ r}{2C})))=D_{\theta}$.
\end{proof}

%
%

\section{Proof of \TheoremI}

\begin{definition}\label{d:tangent}
For a set $\Sigma\subseteq \bR^{n}$, the {\it contingent of $\Sigma$ at $\xi\in \Sigma$} is the union of all half-lines $\{\theta t:t\geq 0\}$  for which there is $\xi_{i}\in \Sigma\backslash\{\xi\}$ converging to $\xi$ so that $(\xi_{i}-\xi)/|\xi_{i}-\xi|\rightarrow \theta$. We say that $\Sigma$ has a {\it $d$-dimensional tangent} at $\xi\in \Sigma$ if the contingent is a $d$-dimensional plane.
\end{definition}

\begin{lemma}\label{l:contingent}
Given a set $\Sigma\subseteq \bR^{d+1}$, let $P$ be the set of points in $\Sigma$ where the contingent is not all of $\bR^{d+1}$. Then $P$ has $\sigma$-finite $\cH^{d}$-measure and for $\cH^{d}$-almost every $\xi\in P$, the union of half-lines in the contingent is either a $d$-plane (in which case $\xi$ is a tangent point for $\Sigma$) or a half-space. 
\end{lemma}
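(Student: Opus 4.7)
The plan is to decompose $P$ into countably many pieces that are locally Lipschitz graphs to obtain $\sigma$-finiteness; then at $\cH^{d}$-a.e.\ $\xi\in P$, use Lebesgue density and Rademacher to produce a tangent plane $V_\xi\subseteq K(\xi)$, where $K(\xi)$ denotes the contingent; and finally argue that any extra direction in $K(\xi)$ forces $K(\xi)$ to contain a full closed half-space bounded by $V_\xi$.

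For each rational $\theta\in \bS^d$ and rational $r,\delta>0$, set
\[
P_{\theta,r,\delta}=\{\xi\in \Sigma:\ \Sigma\cap B(\xi,r)\cap (\xi+C_{\theta,\delta})=\emptyset\},
\]
where $C_{\theta,\delta}=\{tv:t>0,\ v\in B_{\bS^d}(\theta,\delta)\}$. Every $\xi\in P$ lies in some $P_{\theta,r,\delta}$ because $\bS^d\setminus K(\xi)$ is open and nonempty, hence contains a rational spherical ball, and the definition of the contingent supplies a rational $r>0$ making the cone avoid $\Sigma$ up to scale $r$. If $\xi_1,\xi_2\in P_{\theta,r,\delta}$ and $0<|\xi_1-\xi_2|<r$, applying the defining property at each point forces both of $\pm(\xi_1-\xi_2)/|\xi_1-\xi_2|$ out of $B_{\bS^d}(\theta,\delta)$, so $\xi_1-\xi_2$ makes arclength angle at least $\delta$ with $\pm\theta$. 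Hence the orthogonal projection onto $\theta^\perp$ is injective on $P_{\theta,r,\delta}\cap B$ for any ball $B$ of diameter less than $r$, with $\cot(\delta)$-Lipschitz inverse; so $P_{\theta,r,\delta}$ is locally a $d$-dimensional Lipschitz graph of locally finite $\cH^d$-measure, and $P$ is $\sigma$-finite.

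Next, fix $P_k=P_{\theta,r,\delta}$ and a Lipschitz graph $\Gamma_k$ containing it locally. By Lebesgue differentiation for $\cH^d|_{\Gamma_k}$ together with Rademacher, at $\cH^d$-a.e.\ $\xi\in P_k$ the set $P_k$ has $\cH^d$-density $1$ in $\Gamma_k$ and $\Gamma_k$ has a classical tangent $d$-plane $V_\xi$ at $\xi$. Combining the two, for every unit $v\in V_\xi$ there is a sequence $\zeta_j\in P_k\subseteq \Sigma$ with $\zeta_j\to \xi$ and $(\zeta_j-\xi)/|\zeta_j-\xi|\to v$, so $V_\xi\subseteq K(\xi)$. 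A countable union over $\{P_k\}$ gives this for $\cH^d$-a.e.\ $\xi\in P$.

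Finally, suppose such a $\xi$ has $K(\xi)\supsetneq V_\xi$; I must show $K(\xi)$ contains a closed half-space bounded by $V_\xi$. Pick $\psi\in K(\xi)\setminus V_\xi$, normalized with $\psi\cdot n_\xi>0$ for the unit normal $n_\xi$ to $V_\xi$, together with $\xi_i\in \Sigma$ satisfying $\xi_i\to \xi$ and $(\xi_i-\xi)/|\xi_i-\xi|\to\psi$. After passing to a subsequence, either all $\xi_i\notin P$ or all $\xi_i\in P$. In the first case each $\xi_i$ has $K(\xi_i)=\bR^{d+1}$; combining $\Sigma$-points accumulating at $\xi_i$ in arbitrary directions with the convergence $\xi_i\to \xi$ in direction $\psi$ produces approaching sequences at $\xi$ along a dense subset of each great arc through $\psi$, and closedness of $K(\xi)$ then gives $K(\xi)=\bR^{d+1}$, contradicting $\xi\in P$. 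In the second case, by pigeonhole on the countable family a further subsequence lies in a single $P_{k'}\subseteq \Gamma_{k'}$; closedness of $\Gamma_{k'}$ puts $\xi\in \Gamma_{k'}$, and the approach in direction $\psi$ puts $\psi$ in the tangent cone of $\Gamma_{k'}$ at $\xi$, so applying the second-stage analysis to $\Gamma_{k'}$ (after excluding a further null set of non-differentiability points) yields a second tangent $d$-plane $V'\subseteq K(\xi)$ through $\xi$ with $\psi\in V'$ and $V'\ne V_\xi$. A two-scale density argument — combining the density of $P_k$ near $\xi$ in $V_\xi$ at the base scale with the density of $P_{k'}$ near each $\xi_i$ along translates of $V'$ at the intermediate scale $s_i=|\xi_i-\xi|$ — then populates every direction with $n_\xi$-component of the same sign as $\psi\cdot n_\xi$ with an approaching sequence in $\Sigma$, forcing the closed upper half-space into $K(\xi)$. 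The main technical obstacle is this last step: organizing the two density layers at different scales to fill in the interior of the half-space rather than only the two $d$-planes $V_\xi$ and $V'$.
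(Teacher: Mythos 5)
Your $\sigma$-finiteness argument and the production of a tangent plane $V_\xi\subseteq K(\xi)$ at $\cH^d$-a.e.\ $\xi\in P$ are correct and follow the standard route; the paper itself offers no proof, citing only Saks (p.~266) and Lemma~6 of Cs\"ornyei--Wilson, so there is no in-paper argument to compare against. The real content of the lemma, however, is the dichotomy ``$d$-plane or half-space,'' and both branches of your case split break down there. In Case~1, the statement $K(\xi_i)=\bR^{d+1}$ is purely infinitesimal at $\xi_i$: the witnesses $\zeta\in\Sigma$ with $(\zeta-\xi_i)/|\zeta-\xi_i|$ in a prescribed direction satisfy $|\zeta-\xi_i|=o(|\xi_i-\xi|)$, so $(\zeta-\xi)/|\zeta-\xi|\to\psi$ no matter which direction at $\xi_i$ was chosen. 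You therefore produce no new directions at $\xi$, and the claimed pointwise conclusion $K(\xi)=\bR^{d+1}$ is simply false: for $\Sigma=(\bR^{d}\times\{0\})\cup\bigcup_i B(\xi_i,i^{-2})$ with $\xi_i=i^{-1}(e_1+e_{d+1})$, every $\xi_i$ is an interior point of $\Sigma$ (hence $\xi_i\notin P$), but $K(0)$ is the hyperplane $\bR^d\times\{0\}$ together with the single extra ray in direction $(e_1+e_{d+1})/\sqrt2$, which is neither a $d$-plane nor a half-space.

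Case~2 has two independent problems. First, $\xi_i\in P_{k'}\subseteq\Gamma_{k'}$ together with $\xi$ being a differentiability point of $\Gamma_{k'}$ gives you $\psi$ in the tangent plane $V'$ of the graph $\Gamma_{k'}$, but it does \emph{not} give $V'\subseteq K(\xi)$: the only $\Sigma$-points on $\Gamma_{k'}$ are those in $P_{k'}$, and you have density information for $P_k$ at $\xi$, not for $P_{k'}$ at $\xi$ (indeed $\xi$ need not lie in $P_{k'}$ at all, only in its closure). Second, even granting $V'\subseteq K(\xi)$, you have produced a second full $d$-plane through $\xi$, transverse to $V_\xi$, so it contains directions on \emph{both} sides of $V_\xi$; pushing the ``two-scale density'' idea through would then force $K(\xi)=\bR^{d+1}$, contradicting $\xi\in P$, rather than yielding a half-space. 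In short, the gap you flag at the end (filling in the interior of the half-space) is real, but it is not the only one: the preliminary dichotomy on whether the witnesses $\xi_i$ lie in $P$, and the claimed consequences in each case, do not correctly reduce the problem. What is needed is a genuinely measure-theoretic argument using the density of $P_k$ in $\Gamma_k$ at $\xi$ to ``fill in'' between $V_\xi$ and an extra direction $\psi$, not a pointwise limit of contingents along a single sequence.
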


The planar case of this lemma is stated in \cite[p. 266]{Saks-Integral}, but as mentioned in \cite{CW15} after Lemma 6, the above version is proved similarly. 

\begin{remark}
The definition we use for a tangent above is the same as the one given in \cite[Definition 5]{CW15}, except that in their definition, $\Sigma$ is always homeomorphic to a cube, while in our definition we allow $\Sigma$ to be any set. There are many different definitions of tangents in the literature, c.f. \cite[p. 60]{GM08}, \cite{AMT16}, and \cite{Federer}, where the latter two are also defined for general sets. The common thread to each of the definitions is that, for a point $\xi$ to be a tangent for $\Sigma$, $\Sigma$ should look flatter and flatter in smaller and smaller balls around $\xi$. In \cite{AMT16}, for example, we say $\Sigma$ has a { $d$-dimensional tangent} at $\xi\in \Sigma$ if there is a $d$-dimensional plane $V$ containing $\xi$ (not necessarily unique) so that $\lim_{r\rightarrow 0} \sup_{\zeta\in B(\xi,r)\cap \Sigma}\dist(\zeta,V)/r= 0$. This is similar to the definition given in \cite{Federer}. Definition \ref{d:tangent} is stronger in the sense that, if a point has a $d$-dimensional tangent with respect to our definition, it also has one with respect to these other definitions and the tangent plane is unique.

%
\end{remark}

We now begin the proof of \TheoremI. Let $\Sigma$ satisfy the $2$-ball condition with constant $C\geq 2$ and $B$ be a ball centered on $\Sigma$. By scaling, we may assume $B$ has radius $1$. We will show that, for each point in a subset of $\Sigma\cap B$ of positive $\cH^{d}$-measure, the contingent is not $\bR^{d+1}$ or a halfspace, so that by \Lemma{contingent} almost all of these points will be tangent points.

Since $\Sigma$ has the 2-ball condition, we may find two balls of radius $1/C$ in two different components of $ \Sigma^{c}$ in $B$ of radius $1/C$. By rotation and translation, we may assume one is $B(0,1/C)$ and the other $B(ae_{d+1},1/C)$ where $2-2/C\geq a\geq 2/C$ . Let $S$ and $D_{\theta}$ be the sets from \Lemma{T} with $0<\delta<\min\{\delta_{0},\delta_{1}(a,C)\}$. Let $A_{i}$ be a countable partition of $S$ into sets of finite $\cH^{d}$-measure. Let $t_{i}^{-1}=|A_{i}|2^{i}$ and set $\mu=\sum_{i=1}^{\infty} t_{i} \cH^{d}|_{A_{i}}$, so $\mu$ is a finite Borel measure with support equal to $S$.  Set
\[ h(\theta,\xi)=\inf\{|\xi-\zeta|:\zeta\in S\cap \pi_{\theta}^{-1}(\pi_{\theta}(\xi))\backslash\{\xi\}\}\]
with the convention that $\inf \emptyset=\infty$.
\begin{lemma} The function $h$ is a Borel function on $B(\delta)\times S$.
\label{l:hBorel}
\end{lemma}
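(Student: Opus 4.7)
The plan is to write $h$ as a countable infimum of lower semicontinuous functions on the product $B(\delta)\times S$, from which Borel measurability is immediate. The key structural fact I would exploit is that $S=T\cap\Sigma$ is compact, as it is the intersection of the compact convex hull $T$ of two closed balls with the closed set $\Sigma$; this compactness lets me extract convergent subsequences from bounded sequences of near-minimizers.

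For each $\rho>0$, I introduce the truncated version
\[
h_\rho(\theta,\xi) \;=\; \inf\bigl\{|\xi-\zeta| : \zeta\in S,\; \pi_\theta(\zeta)=\pi_\theta(\xi),\; |\zeta-\xi|\geq\rho\bigr\},
\]
with $\inf\emptyset=\infty$. A short argument then shows $h(\theta,\xi)=\inf_{n\in\bN}h_{1/n}(\theta,\xi)$: the inequality $h\le h_\rho$ is immediate since the admissible set for $h_\rho$ sits inside the admissible set for $h$, while for the reverse direction any near-minimizer $\zeta$ for $h(\theta,\xi)$ has $|\xi-\zeta|>0$, so it becomes admissible for $h_{1/n}$ once $n\ge 1/|\xi-\zeta|$, giving $h_{1/n}\le|\xi-\zeta|$.

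The main step is to verify that each $h_\rho$ is lower semicontinuous on $B(\delta)\times S$. Given $(\theta_k,\xi_k)\to(\theta,\xi)$ with $\liminf h_\rho(\theta_k,\xi_k)=c<\infty$, I select near-optimizers $\zeta_k\in S$ satisfying $\pi_{\theta_k}(\zeta_k)=\pi_{\theta_k}(\xi_k)$, $|\zeta_k-\xi_k|\ge\rho$, and $|\zeta_k-\xi_k|\to c$. Compactness of $S$ yields a subsequential limit $\zeta\in S$, and joint continuity of $(\theta,x)\mapsto\pi_\theta(x)=x-(x\cdot\theta)\theta$ forces $\pi_\theta(\zeta)=\pi_\theta(\xi)$ in the limit, while $|\zeta-\xi|\in[\rho,c]$. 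Thus $\zeta$ witnesses $h_\rho(\theta,\xi)\le c$.

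Each $h_\rho$ is therefore Borel, so $h=\inf_n h_{1/n}$ is Borel as a countable infimum of Borel functions. The only delicate point is the lsc argument, where the constraint $|\zeta_k-\xi_k|\ge\rho$ must survive in the limit so that $\zeta\ne\xi$; compactness of $S$ together with the positive gap $\rho$ is precisely what prevents the near-minimizers from collapsing into $\xi$ and reduces the existence of a witness to a compactness exercise. Without the truncation parameter $\rho$ one is really projecting a Borel set with a non-trivial fiber structure and Borel measurability is not automatic, which is why the two-parameter approximation is essential.
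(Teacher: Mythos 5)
Your proof is correct and follows essentially the same strategy as the paper's: truncate $h$ to exclude a neighborhood of $\xi$ (the paper restricts to the set $L_{\theta,\xi,\ve}=[\xi+\ve\theta,\xi+\ve^{-1}\theta]\cup[\xi-\ve\theta,\xi-\ve^{-1}\theta]$, while you impose $|\zeta-\xi|\ge\rho$ and invoke compactness of $S$ in place of the far truncation), prove lower semicontinuity of the truncated functions by extracting a convergent subsequence of near-minimizers in the closed set $S$, and recover $h$ as a countable decreasing limit. The differences are purely cosmetic.
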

We postpone the proof for now until the end of the section. If we set
\[F_{t}:=\{(\theta,\xi)\in B(\delta)\times S: t<h(\theta,\xi)\}\]
then $F_{t}$ is Borel for all $t\geq 0$. Note that $F_{0}$ is the set of pairs $(\theta,\xi)$ so that $\xi\in S$ is an isolated point in $\pi_{\theta}^{-1}(\pi_{\theta}(\xi))\cap S$. Set 
\[ S_{t}(\theta)=\{\xi\in S: (\theta,\xi)\in F_{t}\}\mbox{ and }\theta_{t}(\xi)=\{\theta\in B(\delta): (\theta,\xi)\in F_{t}\}.\]
By Theorem 10.10 in \cite{Mattila}, for each $i\in \bN$, $\cH^{0}(A_{i}\cap \pi_{\theta}^{-1}(x))<\infty$ for every $\theta\in B(\delta)$ and for almost every $x\in L_{\theta}$. Thus, $S\cap \pi_{\theta}^{-1}(x)$ is countable for every $\theta\in \bS^{d}$ and almost every $x\in L_{\theta}$, and so it must contain an isolated point if it is nonempty. By  \Lemma{T}, $\pi_{\theta}(S)\supseteq D_{\theta}$ and so $S\cap \pi_{\theta}^{-1}(x)\neq\emptyset$ for each $x\in D_{\theta}$ and $\theta\in B(\delta)$, thus for each $\theta\in B(\delta)$ and almost every $x\in D_{\theta}$, $S\cap \pi_{\theta}^{-1}(x)\neq\emptyset$ must have an isolated point, or in other words, $S_{0}(\theta)\cap \pi_{\theta}^{-1}(x)\neq\emptyset$. Hence $|S_{0}(\theta)|\geq |\pi_{\theta}(S)|\geq |D_{\theta}|>0$, thus $\mu(S_{0}(\theta))>0$ for all $\theta\in B(\delta)$. Since $F_{0}$ is Borel, we may integrate, apply Fubini, and use the monotone convergence theorem to get
\[0<\int_{B(\delta)} \mu(S_{0}(\theta))d\theta= \int_{S} |\theta_{0}(\xi)|d\mu(\xi)
=\lim_{t\rightarrow 0} \int_{S} |\theta_{t}(\xi)|d\mu
.\]
Thus, if we set $E_{t,s}=\{\xi\in S: |\theta_{t}(\xi)|>s\}$, then $|E_{t,s}|>0$ for some $t\in (0,\diam\Sigma)$ and $s>0$. Let $\{\Omega_{i}\}_{i\in I}$ be the components of $\Sigma$ and for $\xi\in E_{t,s}$ set
\[\theta_{ij}(\xi)=\{\theta\in \theta_{t}(\xi): (\xi,\xi+t\theta]\subseteq \Omega_{i},\;\;  (\xi,\xi-t\theta]\subseteq \Omega_{j}\}.\]
Then since $\Sigma$ is closed 
\begin{align*}
\bigcup_{i,j} \theta_{ij}(\xi)
& =\ck{\theta\in \theta_{t}(\xi): (\xi, \xi+t\theta]\cup (\xi, \xi-t\theta]\subseteq \Sigma^{c}}\\
& =\ck{\theta\in \theta_{t}(\xi): t<h(\theta,\xi)}
=\theta_{t}(\xi)
\end{align*}
 and so $|\theta_{ij}(\xi)|>0$ for some $i,j\in I$. As observed in the introduction, each $\Omega_{i}$ is a $C$-exterior corkscrew domain since $\Sigma$ has the $2$-ball condition. Pick $0<4C\eta<\upsilon<1/4$ as in \Lemma{cones} and apply \Lemma{porous} to $A=\theta_{ij}(\xi)$ to get $\theta_{\theta_{ij}(\xi)}\in \theta_{ij}(\xi)$ and $r_{\theta_{ij}(\xi)}>0$ (depending on $\eta,d$, and $\kappa=|A|/|B(\delta)|\geq s/|B(\delta)|>0$). Since $(\xi,\xi+t\theta)\subseteq \Omega_{i}$ for each $\theta\in \theta_{ij}(\xi)$, by \Lemma{cones} with $\Omega=\Omega_{i}$, $B_{\bS^{d}}(\theta_{0},\delta)=B(\delta)$, we have 
\[C(\xi,\theta_{\theta_{ij}(\xi)},\upsilon r_{\theta_{ij}(\xi)},t/2)^{}\subseteq \Omega_{i} \subseteq \Sigma^{c}.\] 
By applying \Lemma{porous} with $\Omega=\Omega_{j}$, $A=-\theta_{ij}(\xi)$ and $B_{\bS^{d}}(\theta_{0},\delta)=-B(\delta)$, we also get that $C(\xi,-\theta_{\theta_{ij}(\xi)},\upsilon r_{\theta_{ij}(\xi)},t/2)\subseteq \Sigma^{c}$. Thus, the contingent of $\Sigma$ at $\xi$ does not contain any half-line from $\xi$ passing through $B_{\bS^{d}}(\theta_{\theta_{ij}(\xi)},\upsilon r_{\theta_{ij}(\xi)})\cup B_{\bS^{d}}(-\theta_{\theta_{ij}(\xi)},\upsilon r_{\theta_{ij}(\xi)})$, thus the contingent cannot be $\bR^{d+1}$ or a half-space. Since this holds for each $\xi\in E_{t,s}$, by \Lemma{contingent}, we conclude that $\Sigma$ has tangents at almost every point in $E_{t,s}\subseteq B\cap \Sigma$. Since $|E_{t,s}|>0$, we are done.

\begin{proof}[Proof of \Lemma{hBorel}]
For $\ve>0$ let 
\[
L_{\theta,\xi,\ve}=[\xi+\ve \theta,\xi+\ve^{-1}\theta]\cup [\xi-\ve\theta,\xi-\ve^{-1}\theta]\]
and
\[ h_{\ve}(\theta,\xi)=\dist(\xi, S\cap L_{\theta,\xi,\ve}).\]
Note that $h_{\ve}$ decreases pointwise on $\bS^{d}\times\d\Omega$ to $h$ as $\ve\downarrow 0$, and thus it suffices to show that each $h_{\ve}$ is Borel measurable for each $\ve>0$. In fact, we will show $h_{\ve}$ is lower semicontinuous.

Let $(\theta_{j},\xi_{j})\rightarrow (\theta,\xi)\in \bS^{d}\times S$, we will show $h_{\ve}(\theta,\xi)\leq\liminf h_{\ve}(\theta_{j},\xi_{j})$  We can clearly assume $\liminf h_{\ve}(\theta_{j},\xi_{j})<\infty$. By passing to a subsequence if necessary, we may also assume $h_{\ve}(\theta_{j},\xi_{j})$ converges. Let $\zeta_{j}\in S\cap L_{\theta_{j},\xi_{j},\ve}$ be so that $h_{\ve}(\theta_{j},\xi_{j})=|\xi_{j}-\zeta_{j}|$. Passing to another subsequence, we may assume $\zeta_{j}\rightarrow \zeta\in S \cap L_{\theta,\xi,\ve}$ (since $\zeta_{j}$ is a bounded sequence in $S$ and $S$ is closed---this is why we have defined our balls to be closed). Then by definition of $h_{\ve}$,
\[h_{\ve}(\theta,\xi)\leq |\xi-\zeta|=\lim |\xi_{j}-\zeta_{j}|=\lim h_{\ve}(\theta_{j},\xi_{j}).\]
\end{proof}

\section{Proof of \TheoremII}

Let $\Omega$ be a $C$-exterior corkscrew domain, $B$ be a ball centered on $\d\Omega$. We will first prove \TheoremII assuming $\rho=1$, so we assume there is a ball $B(x,r/C)\subseteq B\cap \Omega$. Without loss of generality, we may assume $B(ae_{d+1},r/C)\subseteq B\backslash \Omega$ and $B(0,\rho r/C)\subseteq B\cap \Omega$.  

 Let $S$, $D$, and $D_{\theta}$ be as in \Lemma{T} for $\Sigma=\d\Omega$ and $\delta<\min\{\delta_{0},\delta_{1}(a,C),\delta_{2}\}$ where $\delta_{2}>0$ is a number yet to be determined. Define (see Figure 2.a)
\[
G=\{(\theta,\xi)\in B(\delta)\times S:(\pi_{\theta}(\xi),\xi)\subseteq \Omega.\}, \;\; S(\theta)=\{\xi\in S:(\theta,\xi)\in G\},\]
\[\theta(\xi)=\{\theta\in B(\delta): (\theta,\xi)\in G\}\mbox{ and } E_{\kappa}=\{\xi\in S: |\theta(\xi)|\geq \kappa |B(\delta)|\}\]
where 
\begin{equation}\label{e:kappa}
\kappa=\frac{w_{d}}{2^{d+1}C^{d}M}\leq \frac{|D|}{2|S|}.
\end{equation}

\begin{lemma} \label{l:GBorel}
There is $\delta_{2}=\delta_{2}(d)>0$ so that for $0<\delta<\delta_{2}$, $G$ is Borel.
\end{lemma}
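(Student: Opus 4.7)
The plan is to realize $G$ as a $G_{\delta}$ subset of $B(\delta)\times S$. The defining condition says that the open segment from $\pi_{\theta}(\xi)$ to $\xi$ is contained in the open set $\Omega$; equivalently, at every interior point of that segment the continuous distance to the closed set $\Omega^{c}$ is strictly positive. My approach is therefore to parametrize the segment continuously in $(\theta,\xi,s)$ and encode ``strictly positive on the whole interior'' as a countable intersection of open conditions, obtained by exhausting $(0,1)$ by compact subintervals.

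Concretely, first set
\[
\phi(\theta,\xi,s):=(1-s)\pi_{\theta}(\xi)+s\xi=\xi-(1-s)(\xi\cdot\theta)\theta,
\]
which is jointly continuous on $B(\delta)\times S\times[0,1]$ since $(\theta,\xi)\mapsto\pi_{\theta}(\xi)=\xi-(\xi\cdot\theta)\theta$ is continuous. Composing with the $1$-Lipschitz map $x\mapsto\dist(x,\Omega^{c})$ yields a jointly continuous function $F(\theta,\xi,s):=\dist(\phi(\theta,\xi,s),\Omega^{c})$. For each integer $n\geq 2$, define
\[
F_{n}(\theta,\xi):=\min_{s\in[1/n,\,1-1/n]}F(\theta,\xi,s),
\]
which is continuous in $(\theta,\xi)$ by uniform continuity of $F$ on compact sets. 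Then $G_{n}:=\{F_{n}>0\}$ is open in $B(\delta)\times S$.

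Next I would verify $G=\bigcap_{n\geq 2}G_{n}$: if $(\theta,\xi)\in G$ then $\phi(\theta,\xi,s)\in\Omega$ for every $s\in(0,1)$, so $F_{n}(\theta,\xi)>0$ for every $n$; conversely, if $F_{n}(\theta,\xi)>0$ for all $n$, then every interior point $\phi(\theta,\xi,s)$ lies in $\Omega$, which is exactly the condition $(\pi_{\theta}(\xi),\xi)\subseteq\Omega$. This exhibits $G$ as a countable intersection of open sets, hence Borel.

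The smallness threshold $\delta_{2}$ is not strictly required for measurability; I would use it only to keep the geometric setup nondegenerate and consistent with the other lemmas, e.g., choosing $\delta_{2}$ so small that for $\theta\in B(\delta)$ and $\xi\in S\subseteq T$ the projection $\pi_{\theta}(\xi)$ remains inside $B(0,r/(2C))\subseteq\Omega$ (a perturbation of the fact that $\pi_{-e_{d+1}}(\xi)\in D$). The only mildly subtle point is that one cannot take the infimum of $F$ over all of $(0,1]$: at $s=1$ one has $\phi(\theta,\xi,1)=\xi\in\partial\Omega$, forcing $F$ to vanish; the compact exhaustion by $[1/n,1-1/n]$ is precisely what sidesteps this.
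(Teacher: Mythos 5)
Your argument is correct in outline and takes a genuinely different, and in my view simpler, route than the paper. The paper's proof builds a continuous section $\Theta:B(\delta)\to O(d+1)$ of the rotation action (via Sard's theorem and the inverse function theorem, which is precisely where the paper's $\delta_{2}(d)$ comes from), shows that the ``first exit'' function $g(\theta,x)$ is lower semicontinuous, records that the graph of a Borel function is Borel, and then transports everything back by the homeomorphism $f$. You bypass the rotation machinery entirely: parametrizing each candidate segment by $\phi(\theta,\xi,s)$, composing with the $1$-Lipschitz map $\dist(\cdot,\Omega^{c})$, and exhausting the open parameter interval by the compacta $[1/n,1-1/n]$ exhibits $G$ directly as a $G_{\delta}$. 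This is shorter, requires no Sard/inverse-function input, and is a cleaner demonstration that only continuity of the distance function is at stake.

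One small point deserves attention, because the geometric justification you offer for $\delta_{2}$ is not quite right. It is false that $\pi_{\theta}(\xi)\in B(0,r/(2C))$ for all $\xi\in S$: for $\xi$ near the exterior corkscrew ball $B(ae_{d+1},r/C)$, the projection $\pi_{\theta}(\xi)$ can lie at distance on the order of $r/C$ from the origin. The nondegeneracy your argument actually needs is that $\pi_{\theta}(\xi)\neq\xi$, i.e.\ $\xi\cdot\theta\neq 0$, for $(\theta,\xi)\in B(\delta)\times S$. If $\xi\cdot\theta=0$ then $\phi(\theta,\xi,\cdot)\equiv\xi\in\partial\Omega$, so $F\equiv 0$ and $(\theta,\xi)\notin\bigcap_{n}G_{n}$, while whether $(\theta,\xi)\in G$ depends on how one reads the degenerate ``open segment'' $(\pi_{\theta}(\xi),\xi)$ --- so the asserted identity $G=\bigcap_{n}G_{n}$ needs this case ruled out or handled. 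Two easy fixes: (i) observe that $T\cap L_{\theta}\subseteq B(0,r/C)\subseteq\Omega$ for $\theta\in B(\delta)$ with $\delta$ small (one checks the tangent cone of $T$ at height $0$ stays well inside $B(0,r/C)$), so $S\cap L_{\theta}=\emptyset$ and the degenerate case never arises; or (ii) dispense with $\delta_{2}$ altogether by noting that $\{(\theta,\xi)\in B(\delta)\times S:\xi\cdot\theta=0\}$ is closed, hence Borel, so $G$ differs from $\bigcap_{n}G_{n}$ only by a Borel set and is Borel in any case.
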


We postpone the proof of this to the end of the section and assume further that $\delta<\delta_{2}$. Note that $\pi_{\theta}(S(\theta))\supseteq D_{\theta}$, hence $|D_{\theta}|\leq |S(\theta)|$ and since $G$ is Borel, we may integrate
\begin{align*}
|B(\delta)| |D|
& \leq \int_{B(\delta)}|S(\theta)|d\theta
=\int_{S}|\theta(\xi)|d\cH^{d}(\xi)\\
& \leq \int_{|\theta(\xi)|\leq \kappa}\kappa |B(\delta)| d\cH^{d}(\xi)+ \int_{|\theta(\xi)|> \kappa} |B(\delta)| d\cH^{d}(\xi)\\
& \leq \kappa |B(\delta)||S|+|B(\delta)| |E_{\kappa}|
\stackrel{\eqn{kappa}}{\leq} |D||B(\delta)|/2 + |B(\delta)||E_{\kappa}|
\end{align*}

\begin{figure}[h]
\includegraphics[width=120pt]{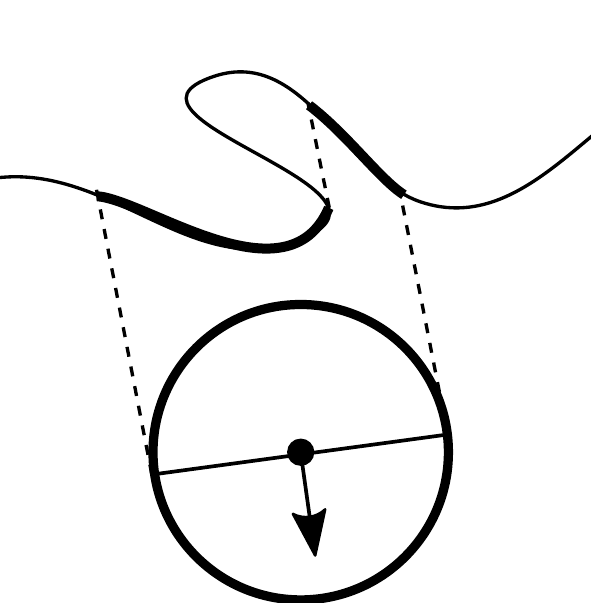}
\includegraphics[width=230pt]{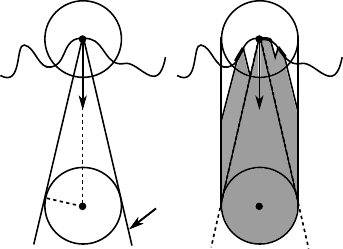}
\begin{picture}(0,0)(350,0)
\put(175,150){$B_{j}$}
\put(175,15){$B_{ij}$}
\put(205,30){$C_{i}(\xi)$}
\put(285,70){$\Omega_{ij}$}
\put(173,90){$x_{i}$}
\put(60,35){$D_{\theta}$}
\put(45,10){$\theta$}
\put(25,80){$S(\theta)$}
\put(65,100){$S(\theta)$}
\put(95,70){$\d\Omega$}
\put(210,135){$\d\Omega$}
\put(335,135){$\d\Omega$}
\put(155,35){$\tau$}
\put(50,-10){a.}
\put(170,-10){b.}
\put(285,-10){c.}
\end{picture}
\label{omega}
\caption{}
\end{figure}
which implies $|E_{\kappa}|\geq |D|/2= w_{d}\ps{\frac{r}{2C}}^{d}/2$.

Let $\upsilon,\eta$ and $c_{0}$ be as in Lemmas \ref{l:porous} and \ref{l:cones}, and let $\xi\in E_{\kappa}$, let $\theta_{\theta(\xi)}$ and $r_{\theta(\xi)}$ be $\theta_{A}$ and $r_{A}$ from \Lemma{porous} with $A=\theta(\xi)$. Note that since $|\theta(\xi)|\geq \kappa |B(\delta)|$, we know $r_{\theta(\xi)}\geq c_{0}\delta$ where $c_{0}$ depends only on $d$ and $\kappa$ (and so just on $d$, $C$, and $M$). Also, if $\theta\in \theta(\xi)$, then $(\pi_{\theta}(\xi),\xi)\subseteq \Omega$, and since $\xi\in B(0, r/C)^{c}$ while $\pi_{\theta}(\xi)\in D_{\theta}\subseteq B(0,\frac{r}{2C})$, we have $|\pi_{\theta}(\xi)-\xi|\geq \frac{r}{2C}$. Hence, if $t=\frac{r}{2C}$, then $(\xi,\xi+t\theta)\subseteq \Omega$ for each $\theta\in \theta(\xi)$.  Pick a maximally $\upsilon c_{0}\delta/2$-separated set $\{x_{i}\}_{i=1}^{n_{1}}\subseteq \bS^{d}$ (with respect to the arclength metric), so that for all $\xi\in E$, there is $x_{i} \in B_{\bS^{d}}(\theta_{\theta(\xi)},\upsilon c_{0}\delta/2)$, and so by \Lemma{cones} with our choice of $t$,

\begin{align*}
C_{i}(\xi)& :=C(\xi,x_{i},\upsilon c_{0}\delta/2,t/2)\subseteq C(\xi,\theta_{\theta(\xi)},\upsilon c_{0} \delta,t/2) \\
& \subseteq C(\xi,\theta_{\theta(\xi)},\upsilon r_{\theta(\xi)},t/2) \subseteq \Omega
\end{align*}
Moreover, as $\xi\in B$ and $t/2=\frac{r}{4C}$, $C_{i}(\xi)\subseteq (1+\frac{1}{4C})B$ as well. Let $\tau=\frac{t}{4}\sin (\upsilon c_{0}\delta/2)$ and $\{y_{j}\}_{j=1}^{n_{2}}$ be a maximally $\tau$-separated set in $E_{\kappa}$. Set $B_{j}=B(y_{j},\tau) $ and
\[E_{ij}=\{\xi\in E_{\kappa}\cap B_{j},: x_{i}\in B_{\bS^{d}}(\theta_{\xi},\upsilon c_{0}\delta/2)\}.\]
Then $B_{ij}:=B_{j}+tx_{i}/4\subseteq C_{i}(y_{j})$ (see Figure 2.b). We now set
 \begin{equation}\label{e:omegaij}
 \Omega_{ij}=\ps{\bigcup_{\xi\in E_{ij}} C_{i}(\xi) \cap \co(B_{j}\cup B_{ij})}^{\circ}\subseteq \Omega\cap \ps{1+\frac{1}{4C}}B
 \end{equation}
 where $\co$ denotes the convex hull, see Figure 2.c.
 
 The above is an $L$-Lipschitz domain with $L=\sec (\upsilon c_{0}\delta/2)$ such that $E_{ij}\subseteq \d\Omega_{ij}\cap \d\Omega$ (see for example Lemma 15.13 of \cite{Mattila}). Moreover, we can find $i,j$ so that 
 \[|E_{ij}|\geq \frac{|E_{\kappa}|}{ n_{1}n_{2}}\geq \frac{w_{d} r^{d}}{n_{1}n_{2}2^{d+1}C^{d}}.\]
 Since $n_{1}$ and $n_{2}$ are bounded above by a number depending only on $d$ and the number $\upsilon c_{0}\delta/2$, we have that $|E_{ij}|\geq  cr^{d}$ for some $c=c(d,C)$. Then $\Omega'=\Omega_{ij}$ is our desired domain and we are done.

Now we consider general $\rho\leq 1$, so assume there is $B(x,\rho r/C)\subseteq B\cap \Omega$. We can assume that, of all balls of the same radius contained in $B\cap \Omega$, $x$ is closest to the center. In this way, if $H$ is the half-sphere of $\d B(x,\rho r/C)$ with pole facing the center of $B$, we can assume there is $y\in H\cap \d\Omega$ (otherwise, we could move $B(x,\rho r/C)$ closer to the center). Then it is not hard to show that $\dist(y, B^{c})\geq c_{d} \rho r$ for some $c_{d}>0$. Then since $y\in \d B(x,\rho r/C)$ and $B(x,\rho r/C)\subseteq \Omega$, we have that $B'\subseteq B$ contains a ball of half its radius (and in particular, at least $1/C$ times its radius) that is also contained in $\Omega$. We now apply our work in the $\rho=1$ case of \TheoremII to $B'$ (and recalling \eqn{omegaij}) to get the desired Lipschitz domain contained in 
\[
\ps{1+\frac{1}{4C}}B' \subseteq 2B' = B(y,c_{d} \rho r) \subseteq B\]
and this finishes the proof.

\begin{proof}[Proof of \Lemma{GBorel}]
A similar argument appears in Remark 2.2 of \cite{JJMMT03}, though not in this generality. 

For $\delta>0$ small enough, there is $\Theta:B(\delta)\rightarrow O(d+1)$ a continuous map that is a homeomorphism onto its image in the orthogonal group such that $\Theta(\theta)(e_{d+1})=\theta$ for all $\theta\in \bS^{d}$. One way to find this map is as follows: The function $h:O(d+1)\rightarrow \bS^{d+1}$ defined by $h(\Theta)=\Theta(e_{d+1})$ is a differentiable map and if $X$ are the set of critical points, then $|h(X)|=0$ by Sard's theorem. For all $\Theta\in O(d+1)$, $h=\Theta^{-1}\circ h\circ \Theta$, so by symmetry of the sphere and $O(d+1)$ we know $h(X)=\emptyset$, thus $X=\emptyset$, and hence $h$ has full rank everywhere. By the inverse function theorem, for $\delta>0$ small enough we can find a $d$-surface $S$ containing the identity map $I\in O(d+1)$ so that $\Theta:=h^{-1}:B(\delta)\rightarrow S$ is a homeomorphism. 

For $\theta\in B(\delta)$, let $\Omega_{\theta}=\Theta(\theta)^{-1}(\Omega)$ and $S_{\theta}=\Theta(\theta)^{-1}(S)$.
For $x\in D_{\theta}$, let $\xi(\theta,x)\in S$ be the unique point such that $\pi_{\theta}(\xi(\theta,x))=x$ and $(\xi(\theta,x),x)\subseteq \Omega$. For $x\in D$, let $\xi'(\theta,x)=\Theta(\theta)^{-1}(\xi(\theta,\Theta(\theta)(x)))$, so this is the unique point in $S_{\theta}$ so that $\pi(\xi'(\theta,x))=x$ and $(\xi'(\theta,x),x)\subseteq \Omega_{\theta}$. Now define $g(\theta,x)=|\xi'(\theta,x)-x|$.

We claim $g:B(\delta)\times D\rightarrow \bR$ is lower semicontinuous. Let $(\theta_{j},x_{j})\in B(\delta)\times D$ converge to $(\theta,x)\in B(\delta)\times D$ we need to show $g(\theta,x)\leq \liminf g(\theta_{j},x_{j})$. By passing to a subsequence, we can assume $g(\theta_{j},x_{j})$ converges, and also that $\xi'(\theta_{j},x_{j})$ converges to a point $\zeta\in S_{\theta}$. Then $\pi(\zeta)=x$ and by definition of the function $\xi'$, we must have 
\[ g(\theta,x)\leq |\zeta-x|=\lim |\xi'(\theta_{j},x_{j})-x_{j}|=\lim g(\theta_{j},x_{j}),\]
and this proves the claim.

The set $\Gamma=\{(\theta,x,g(\theta,x)):\theta\in B(\delta),x\in D\}$ is Borel. To see this, let $I_{j}$ be an enumeration of all open intervals with rational endpoints in $\bR$. Then $(\theta,x,y)\not\in \Gamma$ if and only if there is $j$ with $y\in I_{j}^{c}$ and $g(\theta,x)\in I_{j}$, and so $\Gamma^{c}=\bigcup_{j}  g^{-1}(I_{j}) \times I_{j}^{c}$, thus $\Gamma$ is a Borel set. 

Now define $f:B(\delta)\times \bR^{d+1}\toitself$ by $f(\theta,x)=(\theta,\Theta(\theta)(x))$. Note that$f^{-1}(\theta,x)=(\theta,\Theta(\theta)^{-1}(x))$ is also continuous. Then
\begin{align*}
f(\Gamma)
& =\{(\theta,\Theta(\theta)(x,g(\theta,x))):\theta\in B(\delta),x\in D\}\\
& =\{(\theta,\Theta(\theta)(\xi'(\theta,x)):\theta\in B(\delta),x\in D\}\\
& =\{(\theta,\xi(\theta,x)):\theta\in B(\delta),x\in D_{\theta}\}=G.
\end{align*}
Since $f$ is a homeomorphism, $G$ is also a Borel set.
\end{proof}
\def\cprime{$'$}

\end{document}